\mathchardef\dash="2D
\newcommand{\dR}{\mathrm{dR}}
\newcommand{\Q}{\mathbb{Q}}
\newcommand{\QQ}{\mathbb{Q}}
\newcommand{\Z}{\mathbb{Z}}
\newcommand{\FOg}{\mathbf{FOg}}% filtered ogus
\newcommand{\DM}{\mathbf{DM}} % voevodsky motives
\newcommand{\Og}{\mathbf{Og}}% ogus
\DeclareMathOperator{\iHom}{\underline{\mathrm{Hom}}}
\DeclareMathOperator{\Cone}{Cone}
\DeclareMathOperator{\Hom}{Hom}
\DeclareMathOperator{\id}{id}
\DeclareMathOperator{\D}{D}
\DeclareMathOperator{\Ext}{Ext}
\DeclareMathOperator{\Coker}{Coker}
\DeclareMathOperator{\coker}{coker}
\DeclareMathOperator*{\colim}{colim}
\title{Extensions of filtered Ogus structures}
\author{Bruno Chiarellotto}
\address{Universit\`a di Padova, Dipartimento di Matematica ``Tullio Levi-Civita''}
\email{chiarbru@math.unipd.it}
\author{Nicola Mazzari}
\address{Université de Bordeaux, CNRS, Bordeaux INP, IMB, UMR 5251}
\email{nicola.mazzari@math.u-bordeaux.fr}
\newtheorem{exo}{Exercise}[section]
\newtheorem{lmm}[exo]{Lemma}
\newtheorem{prp}[exo]{Proposition}
\theoremstyle{definition}
\theoremstyle{remark}\newtheorem{rmk}[exo]{Remark}
\theoremstyle{remark}
\theoremstyle{definition}
\begin{document}

%\pagestyle{fancy}
%\lhead{}
%\rhead{}

\keywords{
	Filtered Ogus structures, Extensions}
	\subjclass[2010]{13D09 ; 14F42 ; 14F30}
\maketitle
\begin{abstract}
    We compute the Ext group of the 
	  (filtered) Ogus category over a number field $K$. In particular we prove that the filtered Ogus realisation of mixed motives is not fully faithful.
\end{abstract}

%\footnotesize{\tableofcontents}
\section{Introduction}
Recently Andreatta, Barbieri-Viale and Bertapelle \cite{ABVB16} have defined the filtered Ogus realisation $T_{\FOg}$ for $1$-motives over a number field $K$. In fact by \cite{ChiLazMaz:19a} there exists a cohomology theory  for $K$-varieties with values in $\FOg(K)$ compatible with $T_{\FOg}$. More precisely let $\mathbf{DM}_{gm}(K)$ be the Voevodsky's category of geometric motives over $K$, then there exists a (homological) realisation functor
   	\[
   		R_{\FOg}:\DM_{gm}(K)\to \D^b(\FOg(K))\ 
   	\]
	compatible with $T_{\FOg}$.

The aim of this paper is to compute the Ext group in $\FOg$ (Proposition~\ref{prp:extformula}). We follow the method of Beilinson \cite{Beu:86a,Ban:02a}. 

It follows (see remark~\ref{rmk:badext}) that the filtered Ogus realisation of mixed motives is not fully faithful in general, even though  $T_{\FOg}$ is fully faithful.

\subsection{Notations and conventions}

Throughout this article, $K$ will denote a number field. A place of $K$ will always mean a finite place (we will never need to consider real or complex places). For every such place $v$ of $K$, let $K_v$ denote the completion, $\mathcal{O}_v$ the ring of integers, $k_v$ the residue field, $p_v$ its characteristic, and $q_v=p_v^{n_v}$ its order. For all $v$ which are unramified over $\Q$, let $\sigma_v$ denote the lift to $K_v$ of the absolute Frobenius of $k_v$. 
\section{The categories}
\subsection{The Ogus category}\label{sec:fog}
 Let $P$ be a cofinite set of absolutely unramified places of $K$. We define $\mathcal{C}_P$ to be the category whose objects are systems $M=(M_\dR,(M_v,\phi_v,\epsilon_v)_{v\in P})$ such that:
\begin{enumerate}
	\item $M_\dR$ is a finite dimensional $K$-vector space;
	\item $(M_v,\phi_v)$ is a $F$-$K_v$-isocrystal, that is, $M_v$ is equipped with a $\sigma_v$-linear automorphism $\phi_v$;
	\item $\epsilon=(\epsilon_v)_{v\in P}$ is a system of $K_v$-linear isomorphisms
	\[
		\epsilon_v:M_\dR\otimes K_v\to M_v\ .
	\]
\end{enumerate}

A morphism $f:M\to M'$ is then a collection $(f_\dR,(f_v)_{v\in P})$ where:
\begin{enumerate}
	\item $f_\dR:M_\dR\to M_\dR'$ is a $K$-linear map;
	\item $f_v:M_v\to M_v'$ is $K_v$-linear morphism compatible with Frobenius and such that $\epsilon_v^{-1} \circ f_v \circ \epsilon_v=f_\dR\otimes K_v$.
\end{enumerate}
Note that by the second criterion, to specify a morphism it is enough to specify $f_\dR$. There are obvious `forgetful' functors $\mathcal{C}_P\rightarrow \mathcal{C}_{P'}$ whenever $P'\subset P$ and we can form the Ogus category $\Og(K)$  as the 2-colimit
\[
	\Og(K)=2\colim_{P} \mathcal{C}_P
\]
where $P$ varies over all cofinite sets of unramified places of $K$. For an object $M\in \Og(K)$ and $n\in \Z$ we denote by $M(n)$ the Tate twist of $M$, that is where each Frobenius $\phi_v$ is multiplied by $p_v^{-n}$.

\subsection{Weights}
 A \emph{weight filtration} on an object $M=(M_\dR,(M_v,\phi_v,\epsilon_v)_{v\in P})\in \mathcal{C}_P$ is an increasing filtration $W_\bullet M$ by sub-objects in $\mathcal{C}_P$ such that for all $v\in P$ the graded pieces $\mathrm{Gr}_i^WM_v$ are pure of weight $i$. That is, all eigenvalues of the linear map $\phi_v^{n_v}$ are Weil numbers of $q_v$-weight $i$ (i.e. all their conjugates have absolute value $q_v^{i/2}$ \cite{Chi98}). Again, to give a weight filtration on $M$ it suffices to give a filtration on $M_\dR$ which induces a weight filtration on all $M_v$.

\subsection{The filtered Ogus category}
We can therefore consider the filtered Ogus category $\FOg(K)$ whose objects are objects of $\Og(K)$ equipped with a weight filtration, and morphisms are required to be compatible with this filtration.

\begin{lmm}[\cite{ABVB16}, Lemma 1.3.2] The filtered Ogus category $\FOg(K)$ is a $\Q$-linear abelian category, and the forgetful functor
\[ \FOg(K)\rightarrow \Og(K) \]
is fully faithful. 
\end{lmm}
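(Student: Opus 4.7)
The plan is to reduce everything to a single orthogonality principle: in $\Og(K)$ there are no nonzero morphisms between pure objects of distinct weights. Granted this, the $\Q$-linearity of $\FOg(K)$ is immediate, since the Hom-sets of $\mathcal{C}_P$ are $K$-vector subspaces of $\Hom_K(M_\dR,M'_\dR)$. That $\Og(K)$ itself is abelian follows by computing kernels and cokernels componentwise: on each $v$-component the category of $F$-$K_v$-isocrystals is abelian, and the comparison isomorphisms $\epsilon_v$ transport the construction to the de Rham side.

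The orthogonality principle is proved as follows. Let $M$ be pure of weight $i$, $M'$ pure of weight $j\neq i$, and $f:M\to M'$ a morphism in $\Og(K)$. For each $v\in P$ the image of $f_v:M_v\to M'_v$ is $\phi_v$-stable and is thus an $F$-isocrystal which is simultaneously a quotient of $M_v$ (pure of weight $i$) and a subobject of $M'_v$ (pure of weight $j$). Since the Weil numbers of weights $i$ and $j$ have disjoint sets of archimedean absolute values, this image vanishes, so each $f_v=0$. As $\epsilon_v$ is an isomorphism and $f_\dR\otimes K_v=\epsilon_v^{-1}\circ f_v\circ\epsilon_v$, we obtain $f_\dR=0$ and hence $f=0$.

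A devissage over the length of the weight filtration, using the long exact sequences of $\Hom$ attached to the short exact sequences $0\to W_jM\to M\to M/W_jM\to 0$, upgrades orthogonality to two further statements: every morphism $f:M\to M'$ in $\Og(K)$ between weight-filtered objects automatically satisfies $f(W_iM)\subset W_iM'$, and every such $f$ is strict, i.e.\ $f(W_iM)=\img f\cap W_iM'$. The first yields fullness of the forgetful functor (faithfulness being tautological). For the abelian property of $\FOg(K)$, I equip the kernel with the induced filtration $W_i(\ker f):=\ker f\cap W_iM$ and the cokernel with the quotient filtration $W_i(\coker f):=(W_iM'+\img f)/\img f$; their graded pieces embed into, respectively are quotients of, pure graded pieces of $M$ and $M'$, and hence remain pure of the correct weight. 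The image-equals-coimage axiom is then precisely the strictness statement, since under the identification $M/\ker f\cong\img f$ in $\Og(K)$ the coimage filtration corresponds to $f(W_iM)$ and the image filtration to $\img f\cap W_iM'$.

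The main obstacle is the orthogonality lemma itself, and in particular the uniform control over the Weil-number characterization of weights at all places $v\in P$. Once it is secured, the remainder is formal abelian-category bookkeeping.
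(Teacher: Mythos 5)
The paper does not actually prove this lemma: it is quoted verbatim from \cite{ABVB16}, Lemma~1.3.2, so there is no in-text proof to compare against. Your argument is the standard one (orthogonality of pure weights, then d\'evissage to get automatic preservation and strictness of the weight filtration), and it is essentially correct: the key points --- that a sub- or quotient isocrystal of a pure one is pure of the same weight, that an algebraic number cannot be a Weil number of two distinct $q_v$-weights, and that strictness plus induced/quotient filtrations on kernels and cokernels yield the abelian axioms for $\FOg(K)$ --- all hold. A slightly slicker route to strictness is to observe that at each place $v$ the weight filtration is canonically split by grouping the generalized eigenspaces of the linear map $\phi_v^{n_v}$ according to the weight of the eigenvalue, and then to descend equalities of $K$-subspaces of $M'_\dR$ from $K_v$ to $K$; but your snake-lemma d\'evissage works as well.

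One justification in your write-up is wrong, although the conclusion survives: the Hom-sets of $\mathcal{C}_P$ are \emph{not} $K$-vector subspaces of $\Hom_K(M_\dR,M'_\dR)$. If $\lambda\in K$ and $f_v$ commutes with the $\sigma_v$-semilinear Frobenius, then $\phi'_v\circ(\lambda f_v)=\sigma_v(\lambda)\,\phi'_v\circ f_v$, so $\lambda f_v$ is again a morphism only when $\sigma_v(\lambda)=\lambda$; requiring this at a cofinite set of places forces $\lambda\in\Q$. This is precisely why the lemma asserts only $\Q$-linearity. The Hom-sets are $\Q$-subspaces (closed under addition and under multiplication by rationals, which are fixed by every $\sigma_v$), which is all that is needed.
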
 
\subsection{Internal Hom }
If $M,N$ are two objects in $\FOg$ then we can define the internal Hom, denoted by $\iHom_\FOg(M,N)$ as follows:
\begin{enumerate}
	\item $\iHom_\FOg(M,N)_\dR:=\iHom_K(M_\dR,N_\dR)$ is just the usual Hom of $K$-vector spaces.
	\item for  all $v$, $\iHom_\FOg(M,N)_v:=\iHom_{K_v}(M_v,N_v)$ and for almost all $v$ this $K_v$-vector space is endowed with the Frobenius $f\mapsto \phi_v^N\circ f\circ (\phi_v^M)^{-1}$
	\item $W_r\iHom_\FOg(M,N):=\{f\in \iHom_\FOg(M,N):f(W_iM)\subset W_{i+r}N\}$.
\end{enumerate}

\section{Ext computation}
Let $M,N$ be two objects in $C^b(\FOg)$ (the category of bounded complexes of $\FOg$) and consider the following complexes
\begin{align*}
	A(M,N)&=W_0\iHom^\bullet(M,N)_\dR
	\\
	&=W_0\iHom_K^\bullet(M_\dR,N_\dR)\\
	B(M,N)
	&=
	\prod_v'W_0\iHom^\bullet(M,N)_v	\quad (\text{restricted product})
\end{align*}
and the morphism
\[
	\xi_{M,N}:A(M,N)\to B(M,N)\qquad \xi(x)=(x\phi_M-\phi_Nx),\ .
\]
We want to prove that the cone of this map compute the ext-groups of $\FOg$, i.e.
\[
	\Ext^i_{\FOg}(M,N)\cong H^{i-1}(\Cone(\xi_{M,N}))\ .
\]
\begin{lmm}
	Let $\xi_{M,N}$ as above, then for any $i$ and for any element $b\in B^i(M,N)$  there exist a quasi-isomorphism $N\to E$ of complexes such that the image of $b$ in  $\Coker (\xi_{M,E})$ is zero.
\end{lmm}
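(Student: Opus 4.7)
The plan is to construct $E$ by augmenting $N$ with an acyclic complex built from shifted copies of the components of $M$, equipping it with a Frobenius structure that uses $b$ as an off-diagonal mixing term; the required $x \in A^i(M,E)$ then arises from the tautological inclusion of $M$ into the new acyclic piece.

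Concretely, for each $k \in \Z$ I let $F_k \in C^b(\FOg)$ be the acyclic complex concentrated in degrees $k+i$ and $k+i+1$, with $F_k^{k+i} = F_k^{k+i+1} = M^k$ and differential $\id$. I take $E$ to be the complex whose underlying graded object is $E^j := N^j \oplus M^{j-i} \oplus M^{j-i-1}$, with direct-sum differential and weight filtration inherited from $N$ and the $F_k$'s, and with Frobenius given in block form, for each $v$ at which $b_v \neq 0$, by
\[
 \phi_E^j =
\begin{pmatrix}
\phi_N & -b_v^{j-i} & -d_N\circ b_v^{j-i-1} \\
0 & \phi_M & 0 \\
0 & 0 & \phi_M
\end{pmatrix},
\]
where $b_v^k$ denotes the component $M^k \to N^{k+i}$ of $b_v$ (and $\phi_E$ is block-diagonal at all other places). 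A direct matrix computation shows $\phi_E\,d_E = d_E\,\phi_E$, reducing to the identity $d_N^2 = 0$; moreover $\phi_E$ respects the weight filtration because $b \in W_0\iHom^i$ and $d_N$ is a morphism in $\FOg$. The inclusion $N \hookrightarrow E$ is then a quasi-isomorphism, since $\bigoplus_k F_k$ is acyclic.

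Finally, I define $x \in A^i(M,E)$ by letting $x^k \colon M^k_\dR \to E^{k+i}_\dR$ be the canonical inclusion $m \mapsto (0,m,0)$ into the middle summand $F_k^{k+i}=M^k$. A direct calculation yields
\[
 \xi_{M,E}(x)^k(m) = (0,\phi_M m,0) - \phi_E^{k+i}(0,m,0) = (0,\phi_M m,0) - (-b^k m,\phi_M m,0) = (b^k m,0,0),
\]
which is precisely the image of $b^k$ under $N^{k+i} \hookrightarrow E^{k+i}$. Hence $\xi_{M,E}(x) = b$ in $B^i(M,E)$, so $b$ vanishes in $\Coker(\xi_{M,E})$. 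The main obstacle to anticipate is the bookkeeping that pins down the off-diagonal $(1,3)$-entry of $\phi_E$ to be $-d_N\circ b_v^{j-i-1}$: this is the term forced by compatibility with $d_E$ once the $(1,2)$-entry has been fixed by the desired value of $\xi_{M,E}(x)$, and it works out thanks to $d_N^2 = 0$.
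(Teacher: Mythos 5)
Your proof is correct and follows essentially the same strategy as the paper: adjoin to $N$ an acyclic complex built from the terms of $M$, twist its Frobenius off-diagonally by $b$, and realise $b$ as $\xi_{M,E}$ of the tautological inclusion of $M$ into the new summand. The only (cosmetic) differences are that you present the acyclic part as $\bigoplus_k\bigl(M^k\xrightarrow{\id}M^k\bigr)$ rather than as $\Cone(\id_M)[-1]$ — which shifts the bookkeeping so that $d_M$ never enters your off-diagonal Frobenius entries, only $d_N$ — and that you write out a general degree $i$ where the paper treats only $i=0$.
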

\begin{proof}
	%(We follow Bannai, looks like there is a mistake in the proof of 1.8 but that of 2.15 is correct) 
	Take $b\in B^0(M,N)$, so that $b=(b^i)$ with $b^i\in \prod_v'W_0\iHom(M^i,N^i)_v$. Then we construct $E$ as follows
	\[
		E:=\Cone ((0,\id): M[-1]\to N\oplus M[-1])
	\]
	where everything is defined  as expected but the Frobenius: $\phi_E$ on $E^i=N^i\oplus M^{i-1}\oplus M^i$ is given by
	\begin{align*}
		\phi_E(x,0,0)=&(\phi_N(x),0,0)\\
		\phi_E(0,y,0)=&(b^id_My-d_Nb^{i-1},\phi_M(y),0)\\
		\phi_E(0,0,z)=&(-b^iz,0,\phi_Mz)
	\end{align*}
	%(check there are no problems in this definition)
	
	By construction $N\to E$ is a quasi-isomorphism and there is a short exact sequence
	\[
		0\to N\to E\to \Cone(\id_M)[-1]\to 0 \ .
	\]

	Finally we remark that the natural map $B^0(M,N)\to B^0(M,E)$ sends $b$ to $(b,0,0)$ and we can explicitly compute
	\[
		\xi_{M,E}(0,0,\id)=(0,0,\id)\phi_M-\phi_E(0,0,\id)=(0,0,\phi_M)-(-b,0,\phi_M)=(b,0,0)
	\]
	as expected.
\end{proof}
\begin{prp}\label{prp:extformula}Let $M,N$ be two complexes in $C^b(\FOg)$
	\[
		\Ext^i_{\FOg}(M,N)\cong H^{i-1}(\Cone(\xi_{M,N}))\ .
	\]
\end{prp}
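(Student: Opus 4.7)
The plan is to first reduce the formula to the $i=0$ case, then identify $H^{-1}(\Cone(\xi_{M,N}))$ with $\Hom_{\D^b(\FOg)}(M,N)$ via the preceding effacement lemma. The reduction is immediate: $\iHom^\bullet_K(M_\dR,N_\dR[i])=\iHom^\bullet_K(M_\dR,N_\dR)[i]$ and analogously for the $B$-side, so $\Cone(\xi_{M,N[i]})\cong\Cone(\xi_{M,N})[i]$; combined with $\Ext^i_{\FOg}(M,N)=\Hom_{\D^b(\FOg)}(M,N[i])$, the full formula follows from the $i=0$ case applied to $N[i]$.

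To set up the $i=0$ case, I would first check that $\xi$ is a chain map (routine, using that each $\phi_v$ commutes with $d_M$ and $d_N$), so $\Cone(\xi_{M,N})$ is well-defined and functorial. Because the weight filtration in $\FOg$ is automatically strict (graded pieces are pure of distinct weights), the functors $A(M,-)=W_0\iHom_K^\bullet(M_\dR,-_\dR)$ and $B(M,-)=\prod'_v W_0\iHom_{K_v}^\bullet(M_v,-_v)$ are exact and preserve acyclicity in the second variable. Hence $\Cone(\xi_{M,-})$ descends to a triangulated functor $\D^b(\FOg)\to\D^+(\Q\text{-Vect})$. Sending a chain map $a\colon M\to N$ in $C^b(\FOg)$ to the cocycle $(a,0)$ yields a comparison morphism $\Hom_{\D^b(\FOg)}(M,N)\to H^{-1}(\Cone(\xi_{M,N}))$.

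It remains to prove this comparison is bijective, which is where the lemma enters. For surjectivity, take a cocycle $(a,b)\in A^0\oplus B^{-1}$ of $\Cone(\xi_{M,N})$ (so $d_A a=0$ and $d_B b=-\xi(a)$); the lemma produces a quasi-isomorphism $N\to E$ under which the image of $b$ equals $\xi(c)$ in $B^{-1}(M,E)$ for some $c\in A^{-1}(M,E)$. In the cone over $E$, the class of $(a,b)$ coincides with that of $(a+d_A c,0)$, and the first component is a chain map $M\to E$ in $C^b(\FOg)$: indeed, $\xi(a+d_A c)=\xi(a)+d_B\xi(c)=-d_B b+d_B b=0$. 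Via the quasi-isomorphism $N\to E$, this chain map defines a class in $\Hom_{\D^b(\FOg)}(M,N)$ whose image is $(a,b)$. Injectivity is handled by a parallel argument, absorbing a null-homotopy datum into a standard chain homotopy after a further quasi-isomorphism.

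The main obstacle is guaranteeing that the class in $\Hom_{\D^b(\FOg)}(M,N)$ produced by this effacement procedure is independent of the chosen quasi-isomorphism $N\to E$; this independence follows from the triangulated descent of the cone functor established in the second step, which makes the construction insensitive to the particular resolution.
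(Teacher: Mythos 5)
Your argument is correct and is essentially the paper's proof: both rest on the effacement lemma, on the exactness of $A(M,-)$ and $B(M,-)$ (hence quasi-isomorphism invariance of $\Cone(\xi_{M,-})$), and on the description of $\Ext^i_{\FOg}(M,N)$ as a filtered colimit of $\Hom_{K^b(\FOg)}(M,L[i])$ over quasi-isomorphisms $N\to L$. The only difference is presentational: the paper packages your cocycle chase as the long exact sequence of the triangle $\ker\xi\to\Cone(\xi)[-1]\to\coker(\xi)[-1]$ obtained from the octahedron axiom and then passes to the colimit, whereas your explicit surjectivity and injectivity verifications unwind exactly those connecting maps.
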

\begin{proof}
	The proof is similar to \cite[Proposition 1.7]{Ban:02a}.
	We have by definition
	\[
		\Ext^i_{\FOg}(M,N)=\Hom_{D^b(\FOg)(M,N[i])}=\colim_I\Hom_{K^b(\FOg)}(M,L[i])
	\]
	where $I$ is the category of quasi-isomorphisms $s:N\to L$ in the homotopy category $K^b(\FOg)$.
	
	By the octahedron axiom and the exactness of $A(M,-), B(M,-)$ there is a long exact sequence
	\[
		H^i(\ker \xi_{M,N})\to H^i(\Cone(\xi_{M,N})[-1])\to H^i(\coker(\xi_{M,N})[-1])\to +\ .
	\]
	Note that $H^i(\ker \xi_{M,N})=\Hom_{K^b(\FOg)}(M,N[i])$. By the previous lemma
	\[
		\colim_I H^i(\coker(\xi_{M,L})[-1])= 0\ .
	\]
	Thus we obtain the expected result by taking  the colimit over $I$ of the above long exact sequence.
	
	 We can also give a direct proof in the case of chain complexes concentrated in degree zero, as explained in the following remark.
\end{proof}
\begin{rmk}
	When $M,N\in \FOg$ we can derive the above formula as follows. Let 
	\[
		0\to N\to E\xrightarrow{\pi} M\to 0
	\]
	be an extension in $\FOg$. Choose a section $s_\dR\in W_0\iHom(M,E)_\dR$ of $\pi_dR$. After base change to $K_v$ we get sections $s_v\in W_0\iHom(M,E)_v$ and we can define (for almost all $v$) 
	\[
		x_v:=s_{v}\circ\phi_{M_v}-\phi_{E_v}\circ s_v\ .
	\]
	It follows that $x_v\in W_0\iHom(M,N)_v$ so that $x=(x_v)_{v}$ is an element of $\prod_v' W_{0}\iHom(M,N)_v$. Starting with another section $s_\dR'$ we will get another $x'$ and the difference $x-x'$ lies in $(\circ\phi_M-\phi_N\circ)W_0\iHom(M,N)_\dR$ by construction. Then we easily get a map 
	\[
		\Phi:\Ext^1_{\FOg}(M,N)\to \frac{\prod_v' W_{0}\iHom(M,N)_v}{(\circ\phi_M-\phi_N\circ)W_0\iHom(M,N)_\dR},\quad \Phi(E)=(x_v)_v
	\]
	Moreover given a family $x=(x_v)_v$ as above we can  define the extension $E_x$ to be the direct sum $N\oplus M$ except for the fact that we set the Frobenius to be
	\[
		\phi_{E,v}(n,m):=(\phi_{N,v}(n)-x_v(m),\phi_{M,v}(m))\ .
	\]
	By construction we have $\Phi(E_x)=x$ and we prove that $\Phi$ is an isomorphism.
\end{rmk}
%\begin{rmk}
%	Let $\FOg_i\subset \FOg$ be the full subcategory of poure of weight $i$ objects. If $M,N\in \FOg_i$, then 
%	\[
%		\Ext^1_\FOg(M,N)=\frac{\prod_v'\iHom(M,N)_v}{(\phi_M-\phi_N}\iHom(M,N)_\dR
%	\]
%	which is trivial only if $\Hom(M,N)$ is trivial.
%\end{rmk}
%
\begin{prp}
	Let $M,N\in \FOg$ there is a short  exact sequence 
	\[
		0\to \Ext^1_{\FOg}(M,N)\to \Ext^1_{\Og}(M,N)\to \frac{\prod_v' W_{\ge 1}\iHom(M,N)_v}{(\circ\phi_M-\phi_N\circ)\iHom(M,N)_\dR}\to 0 \ .
	\]
\end{prp}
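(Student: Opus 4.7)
The plan is to derive the sequence from Proposition~\ref{prp:extformula}, together with its straightforward analogue for $\Og$ (the very same argument yields the formula without the weight hypothesis), via the snake lemma. Write $A=\iHom(M,N)_\dR$, $B=\prod_v'\iHom(M,N)_v$, and $\xi=\circ\phi_M-\phi_N\circ$, so that
\[ \Ext^1_{\FOg}(M,N)=W_0B/\xi(W_0A)\quad\text{and}\quad\Ext^1_{\Og}(M,N)=B/\xi(A). \]
Since each $\phi_v$ preserves the weight filtration, $\xi$ respects $W_\bu$, so we get a commutative diagram with exact rows
\[
\begin{CD}
0 @>>> W_0 A @>>> A @>>> A/W_0 A @>>> 0 \\
@. @VV{\xi_1}V @VV{\xi_2}V @VV{\bar\xi}V @. \\
0 @>>> W_0 B @>>> B @>>> B/W_0 B @>>> 0\ .
\end{CD}
\]

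Applying the snake lemma yields a six-term exact sequence
\[
0\to\ker\xi_1\to\ker\xi_2\to\ker\bar\xi\to\coker\xi_1\to\coker\xi_2\to\coker\bar\xi\to 0,
\]
whose middle cokernels are precisely $\Ext^1_{\FOg}(M,N)$ and $\Ext^1_{\Og}(M,N)$. The desired four-term short exact sequence $0\to\Ext^1_{\FOg}\to\Ext^1_{\Og}\to\coker\bar\xi\to 0$ therefore drops out as soon as I verify $\ker\bar\xi=0$ (which, incidentally, will reprove the full faithfulness $\ker\xi_1=\ker\xi_2$ contained in Lemma~1.3.2).

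The hard part is thus the injectivity of $\bar\xi:A/W_0A\to B/W_0B$. I would argue directly: given $a\in A$ with $\xi(a)\in W_0B$, let $r$ be the smallest integer with $a\in W_rA$; if $r\ge 1$, then the image of $a$ in $\gr_r^WA$ is nonzero, so provided the induced map $\gr_r^W\xi:\gr_r^WA\to\gr_r^WB$ is injective, the image of $\xi(a)$ in $\gr_r^WB$ is nonzero, contradicting $\xi(a)\in W_0B\subset W_{r-1}B$. The injectivity of $\gr_r^W\xi$ for $r\ge 1$ is the one nontrivial input: its kernel consists, place by place, of Frobenius-equivariant maps from $\gr_i^WM_v$ to $\gr_{i+r}^WN_v$, but the eigenvalues of $\phi_v^{n_v}$ on source and target are Weil numbers of absolute values $q_v^{i/2}$ and $q_v^{(i+r)/2}$, hence distinct as soon as $r\ge 1$, forcing any such map to vanish. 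With injectivity in hand, $\coker\bar\xi=(B/W_0B)/\bar\xi(A)$ coincides, under the convention $W_{\ge 1}\iHom(M,N)_v=\iHom(M,N)_v/W_0\iHom(M,N)_v$, with $\prod_v'W_{\ge 1}\iHom(M,N)_v/(\circ\phi_M-\phi_N\circ)\iHom(M,N)_\dR$, completing the argument.
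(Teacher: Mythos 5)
Your proof is correct and follows essentially the same route as the paper: the snake-lemma diagram for $W_0A\to A\to A/W_0A$ over $W_0B\to B\to B/W_0B$ is exactly the long exact cohomology sequence of the paper's short exact sequence of cones $0\to W_0\Cone(\xi')\to\Cone(\xi')\to W_{\ge 1}\Cone(\xi')\to 0$. Your explicit verification that $\bar\xi$ is injective (via purity of the weight-graded pieces and the distinctness of Weil-number eigenvalues of different weights) supplies the one nontrivial detail that the paper compresses into the word ``degenerates''.
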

\begin{proof}
	The methods we have introduced to compute the extension groups in $\FOg_K$ work also for $\Og_K$. In fact   we consider the above construction forgetting about weights
	\begin{align*}
		A'(M,N)&=\iHom^\bullet(M,N)_\dR\\
		B'(M,N)
		&=
		\prod_v'\iHom^\bullet(M,N)_v	\\
		\xi_{M,N}':&A'(M,N)\to B'(M,N)\qquad \xi(x)=(x\phi_M-\phi_Nx)
		\end{align*}
		so that we have
		\[
			\Ext^i_{\Og}(M,N)\cong H^{i-1}(\Cone(\xi_{M,N}')) \ .
		\]
		In particular if $M,N\in \FOg$ there is an exact sequence of complexes
		\[
			0\to \Cone(\xi_{M,N})=W_0\Cone(\xi_{M,N}')\to \Cone(\xi_{M,N}')\to W_{\ge 1}\Cone(\xi_{M,N}')\to 0
		\]
		whose associated long exact sequence degenerates to the short exact sequence of the statement.
\end{proof}
\begin{rmk}\label{rmk:thick}
	Let us consider an intermediate category $\FOg\subset \FOg'\subset \Og$ whose objects are $M\in \Og(K)$ endowed with an increasing  filtration $M_i\subset M_{i+1}$ (without any condition on Frobenius eigenvalues). This is just an exact category and it is not full in $\Og$. Nevertheless $\FOg\subset\Og$ is full and for two objects $M,N\in \FOg$ we have
	\[
		\Ext^1_{\FOg}(M,N)\cong \Ext^1_{\FOg'}(M,N)
	\] 
	just following the previous proof.
\end{rmk}
\begin{rmk}\label{rmk:badext}
	It follows from the previous proposition that the $\Ext^1_{\FOg}(M,N)$ are not countable in general and in particular different from motivic cohomology. For instance already for $K=\QQ$  we get
	\[
		\Ext^1_{\FOg}(\QQ,\QQ(1))\cong\frac{\{(a_p)_p\in \prod_p'\QQ_p\}}{\{(b-p^{-1}b)_p:\ b\in \QQ\}}\ ,
	\]
	which is uncountable and so different from $\Ext^1_{\DM}(\QQ,\QQ(1))=\QQ^*\otimes \QQ$. Hence the filtered Ogus realisation of mixed motives in not full.
\end{rmk}

%\bibliographystyle{plain}%bibsty
%\bibliography{2018_ogus_lib.bib}

\end{document}